\DeclareMathAccent{\wtilde}{\mathord}{largesymbols}{"65}
\DeclareSymbolFont{tipa}{T3}{cmr}{m}{n}
\DeclareMathAccent{\invbreve}{\mathalpha}{tipa}{16}
\newcommand{\vertiii}[1]{{\left\vert\kern-0.25ex\left\vert\kern-0.25ex\left\vert #1 
    \right\vert\kern-0.25ex\right\vert\kern-0.25ex\right\vert}}
\theoremstyle{plain}
\newtheorem{thm}{Theorem}
\newtheorem{lem}{Lemma}
\begin{document}
\vskip 1cm

\thispagestyle{empty} \vskip 1cm

%\newpage
%%%%%%%%%%%%%%%%%%%%%%%%%%%%%%%%%%%%%%%%%%%%%%%%%%%%%%%%%%%%%%%%%%%%%%%%%%%%%%%%

\title{{On The Absolute Constant \\in
Hanson-Wright Inequality }}
\author{ Kamyar Moshksar\\
\small Columbia College\\ Vancouver, BC, Canada} \maketitle

\begin{abstract} 
We revisit and slightly modify the proof of the Gaussian Hanson-Wright inequality where we keep track of the absolute constant in its formulation.
\end{abstract}

In this short report we investigate the following concentration of measure inequality which is the "Gaussian-version" of the Hanson-Wright inequality\footnote{In the general formulation of the Hanson-Wright inequality, the entries of $\underline{\boldsymbol{x}}$ are i.i.d. random variables with zero mean, unit variance and sub-Gaussian tail decay.}  [1], [2]:
\begin{thm}[Hanson-Wright Inequality]
\label{lem11}
Let $\underline{\boldsymbol{x}}\sim \mathrm{N}(\underline{0}_n,I_n)$. If $A$ is a nonzero $n\times n$ matrix, then 
\begin{eqnarray}
\label{HW1}
\Pr\Big(|\underline{\boldsymbol{x}}^TA\underline{\boldsymbol{x}}-\mathbb{E}[\underline{\boldsymbol{x}}^TA\underline{\boldsymbol{x}}]|\ge a\Big)\leq 2\exp\Big(-C\min\Big\{\frac{a^2}{\|A\|_2^2},\frac{a}{\|A\|}\Big\}\Big),
\end{eqnarray}
for every $a>0$ where $C$ is an absolute constant that does not depend on $n$, $A$ and $a$.  
\end{thm}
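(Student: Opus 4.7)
The plan is standard: symmetrize, diagonalize to reduce the quadratic form to an independent sum, and apply a Chernoff bound with the explicit chi-squared MGF, tracking constants through the optimization.

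First I would replace $A$ by its symmetric part. Since $\underline{\boldsymbol{x}}^T A \underline{\boldsymbol{x}} = \underline{\boldsymbol{x}}^T \tilde A\underline{\boldsymbol{x}}$ with $\tilde A = (A+A^T)/2$, and symmetrization does not increase either the Frobenius norm $\|\cdot\|_2$ or the spectral norm $\|\cdot\|$, it suffices to assume $A=A^T$. Writing $A=U\Lambda U^T$ and using $\underline{\boldsymbol{y}}:=U^T\underline{\boldsymbol{x}}\sim N(0,I_n)$ by orthogonal invariance of the standard Gaussian,
\begin{equation*}
Z \;:=\; \underline{\boldsymbol{x}}^T A\underline{\boldsymbol{x}} - \mathbb{E}[\underline{\boldsymbol{x}}^T A\underline{\boldsymbol{x}}] \;=\; \sum_{i=1}^n \lambda_i(y_i^2-1),
\end{equation*}
a sum of independent zero-mean variables whose MGFs are known in closed form.

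Next I would apply the Chernoff bound. For $Y\sim N(0,1)$ and $s<1/2$ one has $\mathbb{E}[e^{s(Y^2-1)}]=e^{-s}/\sqrt{1-2s}$, so $\psi(s):=\log\mathbb{E}[e^{s(Y^2-1)}] = -s-\tfrac{1}{2}\log(1-2s)$. This function is nonnegative, vanishes to second order at $s=0$, and on any bounded interval $|s|\le\delta<1/2$ admits an explicit quadratic majorant $\psi(s)\le c\,s^2$ obtained from its Taylor series. Provided $|t|\le\delta/\|A\|$, so that $|t\lambda_i|\le\delta$ for every $i$, independence yields $\log\mathbb{E}[e^{tZ}]\le c\,t^2\sum_i\lambda_i^2 = c\,t^2\|A\|_2^2$, and Markov gives
\begin{equation*}
\Pr(Z\ge a)\;\le\;\exp\bigl(-ta+c\,t^2\|A\|_2^2\bigr), \qquad 0<t\le\delta/\|A\|.
\end{equation*}

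The last step is the optimization over $t$. The unconstrained minimizer $t^\star=a/(2c\|A\|_2^2)$ is feasible iff $a\le 2c\delta\,\|A\|_2^2/\|A\|$, yielding the sub-Gaussian regime $\Pr(Z\ge a)\le\exp(-a^2/(4c\|A\|_2^2))$. In the complementary regime we set $t=\delta/\|A\|$ and use the defining inequality of that regime to absorb the quadratic term against the linear one, recovering the sub-exponential regime $\Pr(Z\ge a)\le\exp(-(\delta/2)\,a/\|A\|)$. Combining gives $\Pr(Z\ge a)\le\exp(-\kappa\min\{a^2/\|A\|_2^2,\,a/\|A\|\})$ with $\kappa=\min\{1/(4c),\,\delta/2\}$. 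Replacing $A$ by $-A$ (which preserves both norms) handles the lower tail identically, and a union bound produces the factor $2$ in~(\ref{HW1}). The real work — and the whole reason to track $\kappa$ — lies in the scalar optimization choosing the pair $(c,\delta)$: shrinking $\delta$ lets one take smaller $c$ (sharper sub-Gaussian constant) but hurts the sub-exponential constant $\delta/2$, and balancing these is what I expect to be the main obstacle in pinning down an explicit, clean value of $\kappa$.
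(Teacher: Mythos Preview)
Your proposal is correct and mirrors the paper's argument essentially step for step: the paper's parameters are precisely your $(c,\delta)$ specialized via the Taylor expansion of $\psi$ to $(2\xi_r,\,r/2)$ for $0<r<1$, which after your optimization $\kappa=\min\{1/(4c),\delta/2\}$ reproduces exactly the paper's $\kappa=\tfrac14\min\{r,1/(2\xi_r)\}$. The only addition is your initial symmetrization step $A\mapsto(A+A^T)/2$, which the paper sidesteps by stating its explicit-constant result (Proposition~1) only for symmetric $A$.
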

Here, $\|A\|_2$ is the Hilbert-Schmidt norm of $A$ defined by 
$$
\|A\|_2:=(\mathrm{tr}(A^TA))^{\frac{1}{2}}
$$
and $\|A\|$ is the operator norm of $A$ defined by 
$$
\|A\|:=\max_{\|\underline{x}\|_2\leq 1}\|A\underline{x}\|_2=(\lambda_{\max}(A^TA))^{\frac{1}{2}},
$$
where $\lambda_{\max}(M)$ denotes the largest eigenvalue of a square matrix $M$.  

Let us denote the largest value for the absolute constant $C$ in (\ref{HW1}) by $C_{HW}$. Even a lower bound on $C_{HW}$ is not reported in the literature. The following lemma presents a lower bound on $C_{HW}$ in the special case where the matrix $A$ in (\ref{HW1}) is a real symmetric matrix.
\begin{lem}
\label{hey_hey11}
For every $0<r<1$ define
\begin{eqnarray}
\label{xi_11}
f(r):=\sum_{j=0}^\infty\frac{r^j}{j+2}.
\end{eqnarray}
If the matrix $A$ in Theorem~\ref{lem11} is real symmetric, then the absolute constant $C_{HW}$ is bounded as  
\begin{eqnarray}
\label{kappa_11}
C_{HW}\ge C^*:=\max_{0<r<1}\min\Big\{\frac{r}{4},\frac{1}{8f(r)}\Big\}\approx 0.1457.
\end{eqnarray}
\end{lem}
\begin{proof}
See the Appendix.
\end{proof}
\begin{figure}
   \centering
    \includegraphics[width=0.7\textwidth]{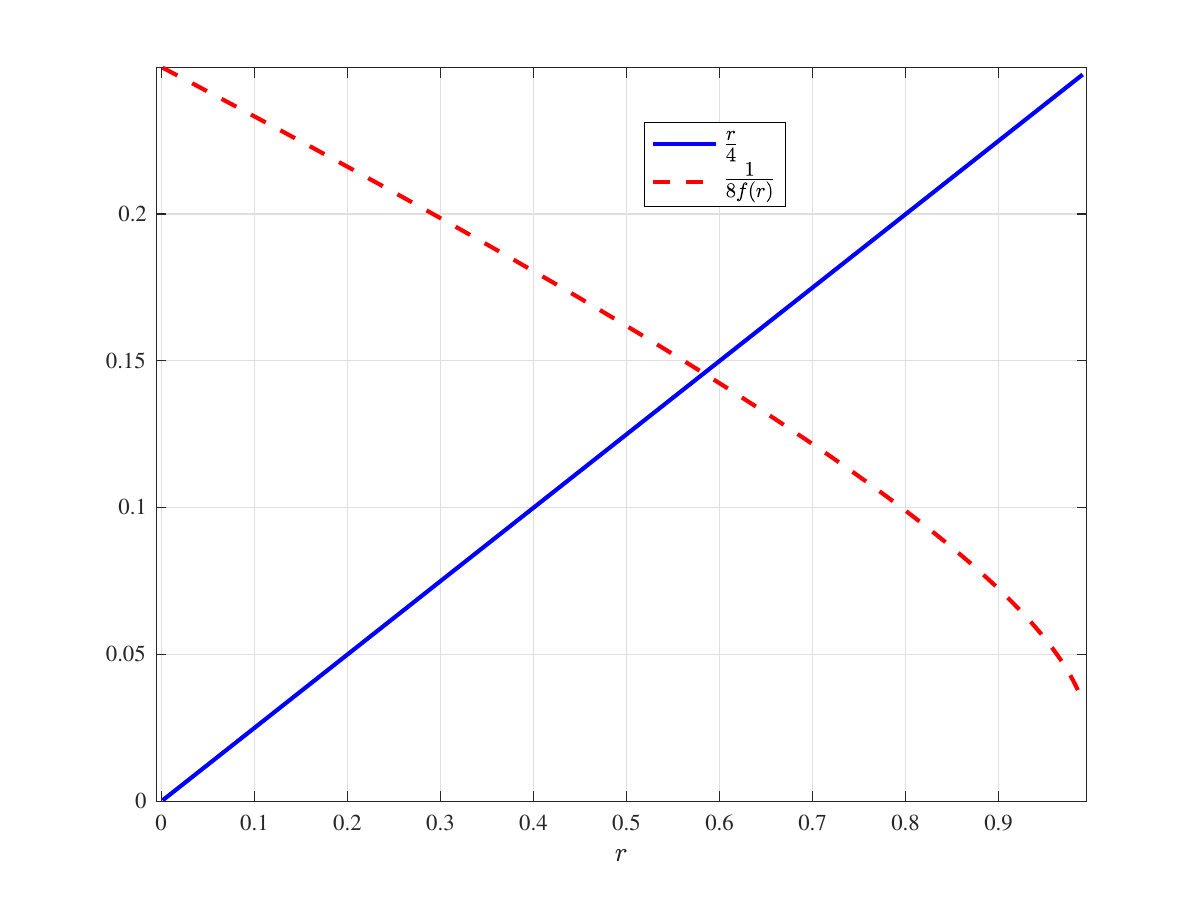}
     \caption{Plots of $\frac{r}{4}$ and $\frac{1}{8f(r)}$ in terms of $0<r<1$. We see that $\min\{\frac{r}{4},\frac{1}{8f(r)}\}$ is maximized for $r\approx 0.583$ and its maximum value is approximately $0.1457$. }
    \label{pict_111}
\end{figure}
Fig.~\ref{pict_111} shows plots of $\frac{r}{4}$ and $\frac{1}{8f(r)}$ in terms of $0<r<1$. We see that the maximum of $\min\{\frac{r}{4},\frac{1}{8f(r)}\}$ is approximately $0.1457$.

\textit{Remark}- The function $f(r)$ in (\ref{xi_11}) is a special case of the so-called Lerch Transcendent function.

\section*{Appendix}
Since $A$ is real symmetric, we can write $A=U^T\Lambda U$ where $U$ is an orthogonal matrix and $\Lambda$ is a diagonal matrix with real diagonal entries $\lambda_1,\cdots, \lambda_n$. Define $\underline{\boldsymbol{v}}:=U\underline{\boldsymbol{x}}$. Then $\underline{\boldsymbol{v}}\sim \mathrm{N}(\underline{0}_n,I_n)$ and we have  $\underline{\boldsymbol{x}}^TA\underline{\boldsymbol{x}}=\sum_{i=1}^n\lambda_i\boldsymbol{v}_i^2$ and $\mathbb{E}[\underline{\boldsymbol{x}}^TA\underline{\boldsymbol{x}}]=\sum_{i=1}^n\lambda_i$. Fix $0<r<1$ and let 
\begin{eqnarray}
\label{ass_111}
0<s\leq\frac{r}{2\max_i|\lambda_i|}.
\end{eqnarray}
Applying Markov's inequality, 
\begin{eqnarray}
\label{kenar_11}
\Pr(\underline{\boldsymbol{x}}^TA\underline{\boldsymbol{x}}-\mathbb{E}[\underline{\boldsymbol{x}}^TA\underline{\boldsymbol{x}}]>a)&\leq& \frac{1}{e^{sa}}\mathbb{E}\big[e^{s(\underline{\boldsymbol{x}}^TA\underline{\boldsymbol{x}}-\mathbb{E}[\underline{\boldsymbol{x}}^TA\underline{\boldsymbol{x}}])}\big]\notag\\
&\stackrel{}{=}&e^{-sa}\prod_{i=1}^n\mathbb{E}[e^{s\lambda_i(\boldsymbol{v}_i^2-1)}]\notag\\
&=&e^{-sa}\prod_{i=1}^n\frac{e^{-s\lambda_i}}{\sqrt{1-2s\lambda_i}},
\end{eqnarray} 
where the penultimate step is due to independence of $\boldsymbol{v}_1,\cdots, \boldsymbol{v}_n$ and last step is due to $|s\lambda_i|<\frac{1}{2}$ and the fact that $\mathbb{E}[e^{s\boldsymbol{v}^2}]=(1-2s)^{-\frac{1}{2}}$ for every $s<\frac{1}{2}$ and $\boldsymbol{v}\sim \mathrm{N}(0,1)$. We write 
\begin{eqnarray}
\label{kenar_22}
\frac{1}{\sqrt{1-2s\lambda_i}}&=&e^{-\frac{1}{2}\ln(1-2s\lambda_i)}\notag\\&\stackrel{(a)}{=}&e^{\frac{1}{2}\sum_{j=1}^\infty\frac{(2s\lambda_i)^j}{j}}\notag\\
&=&e^{s\lambda_i+\frac{1}{2}\sum_{j=2}^\infty\frac{(2s\lambda_i)^j}{j}}\notag\\
&=&e^{s\lambda_i+\frac{1}{2}\sum_{j=0}^\infty\frac{(2s\lambda_i)^{j+2}}{j+2}}\notag\\
&=&e^{s\lambda_i+2s^2\lambda_i^2\sum_{j=0}^\infty\frac{(2s\lambda_i)^{j}}{j+2}}\notag\\
&\stackrel{(b)}{\leq}&e^{s\lambda_i+2s^2\lambda_i^2\sum_{j=0}^\infty\frac{r^{j}}{j+2}}\notag\\
&\stackrel{(c)}{=}&e^{s\lambda_i}e^{2s^2f(r)\lambda_i^2},
\end{eqnarray}
where $(a)$ is due to $-\ln(1-x)=\sum_{j=1}^\infty\frac{x^j}{j}$ for $|x|<1$, $(b)$ is due to $|2s\lambda_i|\le r$ and $f(r)$ in $(c)$ is defined in (\ref{xi_11}). By (\ref{kenar_11}) and (\ref{kenar_22}), 
\begin{eqnarray}
\label{big_11}
\Pr(\underline{\boldsymbol{x}}^TA\underline{\boldsymbol{x}}-\mathbb{E}[\underline{\boldsymbol{x}}^TA\underline{\boldsymbol{x}}]>a)\leq \exp\Big(-sa+2s^2f(r)\sum_{i=1}^n\lambda_i^2\Big).
\end{eqnarray}
Minimizing $-sa+2s^2f(r)\sum_{i=1}^n\lambda^2_i$ in terms of $s$ subject to the constraint in (\ref{ass_111}), we get 
\begin{eqnarray}
s=\min\Big\{\frac{a}{4f(r)\|A\|_2^2}, \frac{r}{2\|A\|}\Big\},
\end{eqnarray}
where we have replaced $\sum_{i=1}^n\lambda_i^2$ and $\max_i |\lambda_i|$ by $\|A\|_2^2$ and $\|A\|$, respectively.  We consider two cases: 
\begin{enumerate}
  \item If $\frac{r}{2\|A\|}\leq \frac{a}{4f(r)\|A\|_2^2}$ or equivalently, $2rf(r)\|A\|_2^2\leq \|A\|a$, then $s=\frac{r}{2\|A\|}$ and we get  
 \begin{eqnarray}
 \label{big_22}
 -sa+2s^2f(r)\|A\|_2^2&=&-\frac{ra}{2\|A\|}+2\Big(\frac{r}{2\|A\|}\Big)^2f(r)\|A\|_2^2\notag\\
 &=&-\frac{ra}{2\|A\|}+\frac{2r^2f(r)\|A\|_2^2}{4\|A\|^2}\notag\\
 &\leq&-\frac{ra}{2\|A\|}+\frac{ra}{4\|A\|}\notag\\
 &=& -\frac{ra}{4\|A\|},
\end{eqnarray}
where the penultimate is due to the condition $2rf(r)\|A\|_2^2\leq \|A\|a$. 
  \item If $\frac{r}{2\|A\|}> \frac{a}{4f(r)\|A\|_2^2}$, then $s=\frac{a}{4f(r)\|A\|_2^2}$ and we get
 \begin{eqnarray}
 \label{big_33}
 -sa+2s^2f(r)\|A\|_2^2=-\frac{a^2}{8f(r)\|A\|_2^2}.
\end{eqnarray}
\end{enumerate}
By (\ref{big_11}), (\ref{big_22}) and (\ref{big_33}), it follows that 
\begin{eqnarray}
\label{dum_11}
\Pr(\underline{\boldsymbol{x}}^TA\underline{\boldsymbol{x}}-\mathbb{E}[\underline{\boldsymbol{x}}^TA\underline{\boldsymbol{x}}]>a)&\leq&\exp\Big(\max\Big\{-\frac{a^2}{8f(r)\|A\|_2^2},-\frac{ra}{4\|A\|} \Big\}\Big)\notag\\
&=& \exp\Big(-\min\Big\{\frac{a^2}{8f(r)\|A\|_2^2},\frac{ra}{4\|A\|} \Big\}\Big).
\end{eqnarray}
Replacing the matrix $A$ by $-A$ and following the same lines of reasoning, we see that $\Pr(\underline{\boldsymbol{x}}^TA\underline{\boldsymbol{x}}-\mathbb{E}[\underline{\boldsymbol{x}}^TA\underline{\boldsymbol{x}}]<-a)$ is bounded from above by the same expression on the right side of (\ref{dum_11}). Hence,  
\begin{eqnarray}
\label{mew_11}
\Pr(|\underline{\boldsymbol{x}}^TA\underline{\boldsymbol{x}}-\mathbb{E}[\underline{\boldsymbol{x}}^TA\underline{\boldsymbol{x}}]|>a)&\leq& 2\exp\Big(-\min\Big\{\frac{a^2}{8f(r)\|A\|_2^2},\frac{ra}{4\|A\|} \Big\}\Big)\notag\\
&\leq&2\exp\Big(-\min\Big\{\frac{r}{4},\frac{1}{8f(r)}\Big\}\min\Big\{\frac{a^2}{\|A\|_2^2},\frac{a}{\|A\|}\Big).
\end{eqnarray}
The upper bound in (\ref{mew_11}) holds for any $0<r<1$. Minimizing this bound over $r$, we get 
\begin{eqnarray}
\Pr(|\underline{\boldsymbol{x}}^TA\underline{\boldsymbol{x}}-\mathbb{E}[\underline{\boldsymbol{x}}^TA\underline{\boldsymbol{x}}]|>a)\leq 2\exp\Big(-C^*\min\Big\{\frac{a^2}{\|A\|_2^2},\frac{a}{\|A\|} \Big\}\Big),
\end{eqnarray}
where $C^*$ is given in (\ref{kappa_11}).

\newpage

\end{document}